\theoremstyle{plain}
\newtheorem{theorem}{Theorem}[section]
\newtheorem{lemma}{Lemma}[section]
\numberwithin{equation}{section}
\theoremstyle{remark}
\newtheorem{remark}{Remark}[section]
\begin{document}

\title[Bernstein type estimate for system of equations]
{Bernstein type gradient estimate for system of weighted local heat equations with potential term}

\author[S. Bhattacharyya]{Sujit Bhattacharyya}

\subjclass[2020]{58J05, 58J35}
\keywords{Gradient estimation; Bernstein type estimation; weighted Riemannian manifold; system of equations; Bakry-\'Emery Ricci curvature}

\begin{abstract}
In this article we provide Bernstein type gradient estimates for two system of local weighted heat type equations with potentials on a weighted Riemannian manifold. We derive all possible cases considering linear potential, exponential potential, combining with static manifold and evolving manifold. This work partially resolved the problem raised by Bhattacharyya et al. in \cite{SB-1}.
\end{abstract}
\maketitle
 
\section{Introduction}
In \cite{SB-1}, the authors studied Bernstein type gradient estimate for local weighted heat type equation with potential on static as well as on evolving manifolds. For the evolving case, they considered local Ricci flow and extended local Ricci flow to derive this estimate and showed that the estimate is independent of the Ricci curvature bound. At the end of \cite{SB-1}, they suggested to derive Bernstein type estimate for a system of equations with exponential potential term as a future work. In this article, two system of weighted local heat equations has been considered and four different Bernstein type estimates has been derived under the following cases
\begin{itemize}
    \item on \textbf{static} manifold with \textbf{linear} potential term,
    \item  on \textbf{static} manifold with \textbf{exponential} potential term,
    \item on \textbf{evolving} manifold with \textbf{linear} potential term,
    \item  on \textbf{evolving} manifold with \textbf{exponential} potential term.
\end{itemize}
This partially solved the problem raised in \cite{SB-1}. The first two estimates has been derived in the second section and the later two in the third section. 
Heat equations are very much popular in the field of manifold theory because many evolution formulas arrive naturally in the form of a parabolic partial differential equation. Solving those equations in general is a very hard task. Numerical solutions does not gives us the essence of the effect of curvature constraints in those solutions. Since the solution to those partial differential equations depends on the shape of the space, so it is necessary to understand how the curvature shapes the solutions. As a result the method of gradient estimation plays a crucial role. In this technique one can get properties of the solutions of those equations without actually solving them. This technique was initiated in the year 1887 by Harnack \cite{harnack}, which is today known as Harnack type estimation. Later it was further developed by Li-Yau \cite{Li-Yau}. After that Hamilton's introduction to Ricci flow \cite{Hamilton-1} and Perelman's solution to geometrization conjecture in 2002-2004 \cite{Perelman-1,Perelman-2,Perelman-3} this technique becomes globally well known. From this point the theory of gradient estimation becomes a new branch of mathematics, which starts to cover various types of linear and nonlinear partial differential equations in both static and evolving manifolds. In this regard some notable works of Abolarinwa covers elliptic gradient estimates for weighted elliptic equation \cite{Abolarinwa-1} of the form $$\Delta u(x) +au^s(x)+\lambda(x)u(x)= 0,$$ Li-Yau type estimate for weighted parabolic equation \cite{Abolarinwa-2} of the form $$(\partial_t-\Delta+\mathcal{R})u(x,t)=-au(x,t)\log u(x,t),$$ along the generalized geometric flow and finally on nonlinear heat type equation \cite{Abolarinwa-3} of the form $$\begin{cases}
    (\partial_t-\Delta_\phi)u=H(u)\\
    u|_{\partial M}=0
\end{cases}.$$ 
In 2024, Azami \cite{Azami-1} studied the equation $$\partial_t u(x,t)=\Delta u(x,t)-p(x,t)A(u(x,t))-q(x,t)(u(x,t))^{a+1},$$ and derived space-time gradient estimate for its positive solutions. Hui et al. contributed significantly by discovering Liouville type theorems for elliptic equation with weighted Laplacian \cite{Hui-1} and with $p$-Laplacian \cite{Hui-2}. So far we have discussed on the importance of the works on single equation.
Now we mention some works on on system of equations. We start with the work of Bhattacharyya et al. \cite{SB-2} on Hamilton and Souplet-Zhang type gradient estimation on evolving manifold for the semilinear parabolic system $$\begin{cases}
    (\Delta_\phi -\partial_t)f = -e^{\lambda_1t}h^p\\
    (\Delta_\phi -\partial_t)h = -e^{\lambda_2t}f^q.
\end{cases}$$ One can also see the works of Winkler \cite{Winkler-1} on Keller-Segal system
$$\begin{cases}
    u_t = \Delta u - \nabla \cdot (u\nabla v),\\
    v_t = \Delta v - v+u.
\end{cases}$$ Also the work of Wu and Yang \cite{Wu-Yang-1} on the system $$\begin{cases}
    (\partial_t - \Delta)u = e^{\alpha t}v^p\\
    (\partial_t - \Delta)v = e^{\beta t}u^q
\end{cases}$$
in this regard is worth mentioning. As mentioned earlier, our previous work \cite{SB-1} is based on Bernstein type estimate for local weighted heat equation $$(\partial_t-\chi^2\Delta_f)u=0,$$ on static manifold and manifold evolving along local Ricci flow $$\partial_t g = -2\chi^2 Ric,$$ and extended local Ricci flow $$\partial_t g = -2\chi^2 Ric+2\alpha\nabla\phi\otimes\nabla\phi.$$ Here $\chi \in C^\infty(M)$ is a real valued smooth function on $M$ with compact support in a smooth bounded domain $\Omega\subset M$. We use this definition of $\chi$ for the rest of the paper. From this work the main motivation for studying Bernstein type estimate comes into play. Now we mention some physical scenarios where these type of local equations plays a significant role.
	\subsection{Physical Meaning}
	The coefficient $\chi^2$ has units of $\text{length}^2/\text{time}$ and determines the rate of diffusion:
	\begin{itemize}
		\item Larger $\chi^2$ means faster spreading and smoothing of disturbances.
		\item Smaller $\chi^2$ means slower diffusion i.e., gradients persist longer.
	\end{itemize}
	
	\subsection*{Thermal Case}
	In heat conduction, \cite{Salazar}
	$$
	\chi^2 = \alpha = \frac{k}{\rho c_p},
	$$
	\noindent where $k$ is thermal conductivity, $\rho$ is density, and $c_p$ is specific heat capacity. This ratio balances the ability to conduct heat versus the capacity to store heat.
	
	\subsection*{Probabilistic Case}
	For Brownian motion, \cite{Grimmett}
	$$
	\text{Var}(B_t) = 2\chi^2 t,
	$$
	\noindent here $\chi^2$ is half the variance growth rate of the random walk.
	
	\subsection*{Finance Case}
	In the Black--Scholes equation \cite[Chapter~15]{Hull}, volatility $\sigma$ plays the role of diffusivity. After a change of variables, the partial differential equation reduces to a heat equation with
	$$
	\chi^2 = \frac{\sigma^2}{2}.
	$$
\subsection*{Neural Fields (Biophysics)}
	Equations of the form\cite{Bressloff}
    $$
	(\partial_t - D \Delta)u = v, \quad (\partial_t - D \Delta)v = u
	$$	
	Used in excitatory--inhibitory neural field models.  
    
	\subsection*{Quantum Field Theory.}
	This type of equation \cite{Hatano}
    $$
	(\partial_t - \tfrac{\hbar^2}{2m}\Delta)u = v, \quad (\partial_t - \tfrac{\hbar^2}{2m}\Delta)v = u
	$$
	used to models coupled Bosonic fields with diffusion-like propagation.  
	\(\chi^2 = \hbar^2 / 2m\). 
	
	\subsection*{Image Processing.}
	$$
	(\partial_t - \chi^2 \Delta)I_1 = I_2, \quad (\partial_t - \chi^2 \Delta)I_2 = I_1
	$$
	The above equations \cite{Perona} can be found for Dual-channel diffusion for PDE-based image enhancement.  
	\(\chi^2 = \sigma^2/2\).  

    \subsection*{Chemical Kinetics.}
	For parabolic system with exponential potential system one can see \cite{Roussel} where 
    $$
	\partial_t C_1 - D \Delta C_1 = k_1 e^{C_2}, \quad \partial_t C_2 - D \Delta C_2 = k_2 e^{C_1}
	$$
	provides a Reaction--diffusion systems with exponential growth rates for coupled systems. 
Thus it is worth to study system of equations along the well constructed theory of gradient estimation technique and so we are considering a weighted Riemannian manifold $(M^n,g,e^{-f}d\mu)$. Let $\chi:M\to \mathbb{R}$ be a smooth map. Consider the system of parabolic equations with potential term of degree one, i.e., with linear potential,
\begin{equation}\label{eq_heat_1}
\begin{cases}
    \partial_t u = \chi^2 \Delta_f u - v,\\
    \partial_t v = \chi^2 \Delta_f v - u,\\
    u(x,0) = u_0,\ \forall x\in M,\\
    v(x,0) = v_0,\ \forall x\in M.\\
\end{cases}    
\end{equation}
It can be seen that $u(x,t)=u_0(x)$ and $v(x,t) = v_0$, for all $x\in M\setminus supp\{\chi\}$. We first assume that the manifold $M$ is not evolving along any geometric flow i.e., the manifold is a static weighted Riemannian manifold. Next we will be following the methodology as showed in \cite{ChowAA}, and from there we find that 
\begin{eqnarray}\label{eq_delfu2}
    \nonumber \Delta_f u^2 &=& \nabla_i(\nabla_i u^2)-\langle\nabla f,\nabla u^2\rangle\\
    \nonumber &=& \nabla_i (2u \nabla_iu)-2u\langle\nabla f,\nabla u\rangle\\
    \nonumber &=& 2|\nabla u|^2+2u\Delta u -2u\langle\nabla f,\nabla u\rangle\\
    &=& 2|\nabla u|^2+2u\Delta_f u
\end{eqnarray}
\begin{remark}
    Note that our equation is symmetric with respect to $u$ and $v$. This means that if we interchange $u$ and $v$ in the first equation we get the second one, as a result all we need to do is to find evolution equations for one equation and interchanging $u$ and $v$ will lead to the evolution equation for the other.
\end{remark}
After that we consider the manifold $M$ to be evolving along the local Ricci flow and derive the gradient estimate for the equation \eqref{eq_heat_1}. During the calculation it was observed that we can actually solve the problem proposed in \cite{SB-1}, with suitable application of Young's inequality. As a result we investigated the system with exponential potential term given by
\begin{equation}\label{eq_heat_2}
\begin{cases}
    \partial_t u = \chi^2 \Delta_f u + ae^v,\\
    \partial_t v = \chi^2 \Delta_f v + be^u,\\
    u(x,0) = u_0,\ \forall x\in M,\\
    v(x,0) = v_0,\ \forall x\in M,\\
\end{cases}    
\end{equation}
and found that the Bernstein type estimate can be derived for the case $a<0,b<0$. So we have not completely solved the problem but we got a possible way to resolve the issue. However, we will again mention this issue in the last section of this manuscript. We derived Bernstein type estimate for the equation \eqref{eq_heat_2} on both static and evolving manifolds.
\section{Bernstein type estimate on static manifold}
\subsection{Bernstein type gradient estimate for system of weighted local heat equation with linear potential term on static manifold}
\begin{theorem}\label{thm_pol}
    If $u,v\ge 0$ be a solution to the system \eqref{eq_heat_1} on $M$ with $Ric_f^{m-n}\ge -Kg$, then there exist constants $$B_1 = (\Phi_0(K)T+\frac12)\underset{\Omega}{\max}\ u_0+T\underset{\Omega}{\max}\ v_0$$ and $$B_2 = (\Phi_0(K)T+\frac12)\underset{\Omega}{\max}\ v_0+T\underset{\Omega}{\max}\ u_0,$$ such that 
    \begin{equation}\label{eq_estm}
        \begin{cases}
            |\nabla u|^2 \le \frac{B_1}{\chi^2 t},\\
            |\nabla v|^2 \le \frac{B_2}{\chi^2 t},\text{both on $\Omega\times(0,T]$,}
        \end{cases}
    \end{equation}where $$\Phi(\chi,f,m,K)=8m|\nabla\chi|^2-\chi\Delta_f\chi+7|\nabla\chi|^2+\frac{1}{4}+K,$$ and $\Phi_0(K):=\underset{\Omega}{\max}\ \Phi(\chi,f,m,K)>-(1+\frac{1}{2T}),\forall K\ge 0$.
\end{theorem}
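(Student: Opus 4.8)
The plan is to run Bernstein's method: build an auxiliary function from $t\chi^2|\nabla u|^2$ plus a zeroth-order correction, show it is essentially a subsolution of $\partial_t-\chi^2\Delta_f$, and read off the bound from the parabolic maximum principle, using the symmetry noted in the Remark to obtain the $v$-estimate for free. First I would record the a priori sup-bounds. Since $u,v\ge 0$, the coupling term $-v$ in \eqref{eq_heat_1} is nonpositive, so $\partial_t u-\chi^2\Delta_f u=-v\le 0$ and $u$ is a subsolution; as $\chi$ is supported in $\Omega$ (so $u\equiv u_0$ off $\operatorname{supp}\chi$), the maximum principle gives $0\le u\le\max_\Omega u_0$ and, symmetrically, $0\le v\le\max_\Omega v_0$. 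These sup-bounds are what will turn into the data-dependent constants $B_1,B_2$.

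Next I would set $F=t\chi^2|\nabla u|^2+\tfrac12u^2$ on $\overline\Omega\times[0,T]$. The choice of the $\tfrac12u^2$ term is forced by \eqref{eq_delfu2}: a short computation using \eqref{eq_heat_1} and \eqref{eq_delfu2} gives $(\partial_t-\chi^2\Delta_f)\tfrac12u^2=-\chi^2|\nabla u|^2-uv$, and the term $-\chi^2|\nabla u|^2$ is designed to cancel the term $\chi^2|\nabla u|^2$ produced by $\partial_t(t\chi^2|\nabla u|^2)$. I would then differentiate $t\chi^2|\nabla u|^2$, substitute $\partial_t u$ from \eqref{eq_heat_1}, and apply the weighted Bochner formula $\tfrac12\Delta_f|\nabla u|^2=|\nabla^2u|^2+\langle\nabla u,\nabla\Delta_f u\rangle+\mathrm{Ric}_f(\nabla u,\nabla u)$. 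The third-order terms $\langle\nabla u,\nabla\Delta_f u\rangle$ cancel, leaving a good term $-2t\chi^4|\nabla^2u|^2$, a curvature term handled by $\mathrm{Ric}_f^{m-n}\ge-Kg$ (so $-\mathrm{Ric}_f(\nabla u,\nabla u)\le K|\nabla u|^2$), several cross terms carrying $\nabla\chi,\ \Delta_f\chi,\ \nabla f$ (arising from $\Delta_f\chi^2=2|\nabla\chi|^2+2\chi\Delta_f\chi$ and from $2\Delta_f u\,\langle\nabla u,\nabla\chi^2\rangle$), and the genuinely new coupling term $-2t\chi^2\langle\nabla u,\nabla v\rangle$.

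The core of the proof is to absorb every cross term into the good term $-2t\chi^4|\nabla^2u|^2$ by Young's inequality; this is exactly how the coefficient $\Phi$ is assembled. The cross terms containing $\Delta_f u$ are first dominated by $|\nabla^2u|$ via a dimensional inequality $(\Delta_f u)^2\le m|\nabla^2u|^2$ (the $\nabla f$ contribution being accounted for through the $m$-Bakry--\'Emery tensor), which is what produces the factor $8m|\nabla\chi|^2$; the remaining $\nabla\chi$ terms together with $\Delta_f\chi^2$ yield the $-\chi\Delta_f\chi+7|\nabla\chi|^2$ contributions, and the curvature bound yields $K$. The coupling term I would split as $-2t\chi^2\langle\nabla u,\nabla v\rangle\le\tfrac12t\chi^2|\nabla u|^2+2t\chi^2|\nabla v|^2$: the first piece contributes the $\tfrac14$ to $\Phi$, while the second piece is controlled by running the companion estimate for $v$ simultaneously (equivalently, by working with $F_u+F_v$ and invoking the symmetry of the system). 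The outcome is a differential inequality of the schematic form $(\partial_t-\chi^2\Delta_f)F\le 2\Phi\,t\chi^2|\nabla u|^2-uv+(\text{a remainder controlled by }\max_\Omega v_0)$, with $\Phi$ as in the statement and $\Phi_0=\max_\Omega\Phi$.

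Finally I would apply Hamilton's trick to $H(t)=\max_{\overline\Omega}F(\cdot,t)$. On $\partial\Omega$ one has $\chi=0$, so $F=\tfrac12u^2$ is controlled by the data; at $t=0$, $H(0)\le\tfrac12\max_\Omega u_0$; and at an interior spatial maximum $\nabla F=0$ and $\Delta_f F\le0$, whence $H'(t)\le(\partial_t-\chi^2\Delta_f)F$. Here I would use the critical-point identity $t\chi^2\nabla|\nabla u|^2=-u\nabla u-t|\nabla u|^2\nabla\chi^2$ to trade the surviving first-order gradient terms for quantities controlled by $\max_\Omega u_0$, and $v\le\max_\Omega v_0$ for the coupling remainder; integrating the resulting differential inequality over $[0,T]$ then gives $H(T)\le(\Phi_0(K)T+\tfrac12)\max_\Omega u_0+T\max_\Omega v_0=B_1$, so $t\chi^2|\nabla u|^2\le F\le B_1$, which is \eqref{eq_estm}. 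The hypothesis $\Phi_0(K)>-(1+\tfrac1{2T})$ is precisely the admissibility condition that keeps $B_1$ positive and the sign bookkeeping consistent. The estimate for $v$, with $B_2$, follows by interchanging $u$ and $v$ as allowed by the Remark. The main obstacle, and the only genuinely new difficulty compared with the single-equation case of \cite{SB-1}, is the coupling term $-2t\chi^2\langle\nabla u,\nabla v\rangle$: handling it without destroying the clean $1/(\chi^2t)$ rate forces the two gradient quantities to be estimated together and is exactly what introduces the $T\max_\Omega v_0$ term into $B_1$.
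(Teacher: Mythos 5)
Your setup (the weighted Bochner formula, the inequalities that assemble $\Phi$, Young's inequality on the coupling term, symmetry for the $v$-estimate) matches the paper, but there is a genuine gap at the decisive step: your auxiliary function $F=t\chi^2|\nabla u|^2+\tfrac12u^2$ is not a subsolution, and neither of your proposed substitutes repairs this. After the Bochner/Young reduction one has $(\partial_t-\chi^2\Delta_f)F\le 2\Phi_0\,t\chi^2|\nabla u|^2+2t\chi^2|\nabla v|^2-uv$; the $\tfrac12u^2$ correction only cancels the $+\chi^2|\nabla u|^2$ coming from differentiating the factor $t$. The leftover $2t\chi^2|\nabla v|^2$ is a \emph{gradient} term, not a zeroth-order term, so it is not ``a remainder controlled by $\max_\Omega v_0$'' --- bounding it is precisely what the theorem is trying to prove, so that step is circular. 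Your alternative, working with $F_u+F_v$, gives $(\partial_t-\chi^2\Delta_f)(F_u+F_v)\le 2(\Phi_0+1)\,t\chi^2(|\nabla u|^2+|\nabla v|^2)-2uv$, and since $\Phi_0+1>0$ (indeed $\Phi_0\ge\tfrac14+K$, as one sees by evaluating $\Phi$ at a maximum point of $\chi$, where $\nabla\chi=0$ and $\chi\Delta_f\chi\le0$), the right-hand side is not nonpositive; Hamilton's trick then only yields $H'\le 2(\Phi_0+1)H$, i.e. a Gronwall bound $H(T)\le H(0)e^{2(\Phi_0+1)T}$, exponential in $T$, not the stated linear constants $B_1,B_2$. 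The critical-point identity cannot rescue this either: after the reduction there are no first-order terms left to trade away, only the quadratic gradient terms themselves.

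The paper's fix, which your proposal is missing, is to choose the zeroth-order corrections with the right weights: the auxiliary function is $G_u=t\chi^2|\nabla u|^2+(\Phi_0(K)T+\tfrac12)u^2+Tv^2$. Since $(\partial_t-\chi^2\Delta_f)(u^2)=-2\chi^2|\nabla u|^2-2uv$ and likewise for $v^2$, the term $(\Phi_0T+\tfrac12)u^2$ produces $-2(\Phi_0T+\tfrac12)\chi^2|\nabla u|^2$, which dominates the positive term $2(\Phi_0 t+\tfrac12)\chi^2|\nabla u|^2$ for $t\le T$, and the term $Tv^2$ produces $-2T\chi^2|\nabla v|^2$, which swallows the coupling term $2t\chi^2|\nabla v|^2$ exactly. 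What survives is $-2(\Phi_0T+T+\tfrac12)uv\le0$, and this is where the hypothesis $\Phi_0>-(1+\tfrac1{2T})$ enters (its role is the sign of this $uv$ coefficient, not the positivity of $B_1$). The ordinary weak maximum principle on $\Omega\times[0,T]$ then applies directly --- no Hamilton's trick, no a priori sup bounds on $u,v$, and no integration in time --- and the parabolic-boundary values of $G_u$ (where $\chi\equiv0$ on $\partial\Omega\times[0,T]$, and $G_u=(\Phi_0T+\tfrac12)u_0^2+Tv_0^2$ at $t=0$) give $B_1$; interchanging $u$ and $v$ gives $B_2$.
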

\begin{proof}
    Before starting the proof it should be mentioned that the function $\Phi$ will be used to group terms that contains the arguments $(\chi,f,m,K)$, their combinations or their derivatives. Observe that
    \begin{eqnarray}\label{eq_deltu2}
        \nonumber \partial_t (u^2) &=& 2u u_t\\
        \nonumber &=& 2u(\chi^2\Delta_f u-v)\\
        &=& \chi^2(\Delta_f (u^2)-2|\nabla u|^2)-2uv
    \end{eqnarray}
    Next we simplify the following expression
    \begin{equation}\label{eq_f}
        (\partial_t - \chi^2 \Delta_f)(\chi^2 t |\nabla u|^2).
    \end{equation}
    Since $\chi$ is a function on the spatial variable so it is independent of time and thus
    \begin{equation}\label{eq_f_2}
        (\partial_t - \chi^2 \Delta_f)(\chi^2 t |\nabla u|^2)= t(\partial_t - \chi^2 \Delta_f)(\chi^2 |\nabla u|^2)+\chi^2 |\nabla u|^2.
    \end{equation}
    Hence all it remains to get the evolution equation for $\chi^2 |\nabla u|^2$. To derive this we proceed as follows.
    
    Since $M$ is a static manifold so
    \begin{eqnarray}\label{eq_evol1}
        \nonumber\partial_t(\chi^2 |\nabla u|^2) &=& 2\chi^2\langle \nabla u,\nabla u_t\rangle\\
        \nonumber&=& 2\chi^2 \langle \nabla u, \nabla(\chi^2\Delta_f (u) - v) \rangle\\
        &=& 4\chi^3\Delta_f u\langle\nabla u, \nabla \chi \rangle+2\chi^4\langle\nabla u, \nabla\Delta_fu\rangle-2\chi^2\langle\nabla u, \nabla v\rangle
    \end{eqnarray}
    Applying weighted Bochner formula in \eqref{eq_evol1} we get
    \begin{eqnarray}\label{eq_evol_1}
        \nonumber\partial_t(\chi^2 |\nabla u|^2) &=& \chi^4(\Delta_f|\nabla u|^2-2|\text{Hess }u|^2-2Ric_f(\nabla u,\nabla u))\\
        &&+4\chi^3\Delta_f u\langle \nabla u,\nabla \chi \rangle-2\chi^2\langle\nabla u,\nabla v\rangle
    \end{eqnarray}
    and
    \begin{eqnarray}\label{eq_evol_2}
        \nonumber\chi^2\Delta_f(\chi^2 |\nabla u|^2) &=& \chi^2 \left( 2|\nabla u|^2 (\chi\Delta_f\chi + |\nabla \chi|^2) + \chi^2 \Delta_f|\nabla u|^2 + 8\chi\text{Hess}u(\nabla u,\nabla \chi)\right)
    \end{eqnarray}
    From \eqref{eq_evol_2} and \eqref{eq_evol_1} we get
    \begin{eqnarray}\label{eq_main_1}
        \nonumber(\partial_t-\chi^2\Delta_f)(\chi^2|\nabla u|^2) &=& -2\chi^4|\text{Hess }u|^2-2\chi^4 Ric_f(\nabla u,\nabla u) +4\chi^3\Delta_fu\langle\nabla u,\nabla \chi\rangle \\
        \nonumber &&-2\chi^2\langle \nabla u,\nabla v \rangle -2\chi^2|\nabla u|^2(\chi \Delta_f\chi + |\nabla \chi|^2) \\
        &&-8\chi^3 \text{Hess }u(\nabla u,\nabla\chi).
    \end{eqnarray}
    Now we estimate the terms on the right hand side one by one. For $m>n$ we have from \cite{ChowAA}
    \begin{eqnarray}\label{eq_lap_bound}
        \nonumber4\chi^3\Delta_fu\langle \nabla u,\nabla \chi\rangle &\le& 16m\chi^2|\nabla u|^2|\nabla \chi|^2+\chi^4|\text{Hess }u|^2\\
        &&+\frac{2\chi^4}{m-n}\langle\nabla f,\nabla u\rangle^2.
    \end{eqnarray}
    and 
    \begin{eqnarray}\label{eq_hess_bound}
        -8\chi^2\text{Hess }u(\nabla\chi,\nabla u) &\le& 16\chi^2|\nabla u|^2|\nabla \chi|^2+\chi^4|\text{Hess }u|^2.
    \end{eqnarray}
    Here we have used $\frac1m (\Delta_f u)^2\le |\text{Hess u}|^2+\frac{1}{m-n}\langle\nabla f,\nabla u\rangle^2$ from \cite{Hui-1}. Using \eqref{eq_hess_bound}, \eqref{eq_lap_bound} in \eqref{eq_main_1} and applying Cauchy-Schwarz inequality, we infer
    \begin{eqnarray}
        \nonumber(\partial_t-\chi^2\Delta_f)(\chi^2|\nabla u|^2) &\le& 2\chi^2|\nabla u|^2\{8m|\nabla\chi|^2-(\chi\Delta_f\chi+|\nabla\chi|^2)+8|\nabla\chi|^2\}\\
        \nonumber&&+2\chi^2|\nabla u||\nabla v|-2\chi^2Ric_f^{m-n}(\nabla u,\nabla u)\\
        \nonumber &\le& 2\chi^2|\nabla u|^2\Phi(\chi,f,m,K)+\chi^2\left(\frac{|\nabla u|^2}{2}+2|\nabla v|^2\right)\\
        &&-2\chi^2Ric_f^{m-n}(\nabla u,\nabla u)
    \end{eqnarray}
    Here we have used Young's inequality $2ab\le \frac{a^2}{2}+2b^2$. After that we apply the curvature condition $Ric_f^{m-n}\ge -Kg$ and update the definition of $\Phi(\chi,f,m,K)$ by absorbing the term $\frac{1}{2}$ and $K$ into $\Phi(\chi,f,m,K)$, thus we get 
    \begin{eqnarray}
        (\partial_t - \chi^2 \Delta_f)(\chi^2|\nabla u|^2)\le 2\chi^2|\nabla u|^2\Phi(\chi,f,m,K)+2\chi^2|\nabla v|^2.
    \end{eqnarray}
    Since $\Phi_0(K)=\underset{\chi,m,f,\Omega}{max}\Phi(\chi,f,m,K)$ so
    \begin{eqnarray}
        \nonumber (\partial_t - \chi^2\Delta_f)(\chi^2 t|\nabla u|^2) &=& t(\partial_t - \chi^2\Delta_f)(\chi^2|\nabla u|^2)+\chi^2|\nabla u|^2\\
        \nonumber &\le& 2t\chi^2|\nabla u|^2\Phi_0(K)+\chi^2|\nabla u|^2+2t\chi^2|\nabla v|^2\\
        \nonumber &=& 2\chi^2|\nabla u|^2(\Phi_0(K)t+\frac12)+2t\chi^2|\nabla v|^2.
        \end{eqnarray}
        Thus by \eqref{eq_deltu2}, we get
        \begin{eqnarray}
        \nonumber \implies (\partial_t - \chi^2\Delta_f)\left(\chi^2 t|\nabla u|^2+(\Phi_0(K)T+\frac12)u^2+Tv^2\right) &\le& -2\left(\Phi_0(K)T+T+\frac12 \right)uv\\
        &\le& 0,\ \text{as $u,v\ge 0$}.
    \end{eqnarray}
    As $\chi^2t|\nabla u|^2|_{(\Omega\times {0})\cup(\partial\Omega\times [0,T])}\equiv 0$, $u|_{\partial \Omega\times [0,T]}=u_0$ and our equation is symmetric with respect to $u$ and $v$, hence by maximum principle we find that 
    $$\chi^2 t|\nabla u|^2\le \chi^2 t|\nabla u|^2+(\Phi_0(K)T+\frac12)u^2+Tv^2\le (\Phi_0(K)T+\frac12)\underset{\Omega}{\max}\ u_0+T\underset{\Omega}{\max}\ v_0.$$
    $$\chi^2 t|\nabla v|^2\le \chi^2 t|\nabla v|^2+(\Phi_0(K)T+\frac12)v^2+Tu^2\le (\Phi_0(K)T+\frac12)\underset{\Omega}{\max}\ v_0+T\underset{\Omega}{\max}\ u_0.$$
    Assuming 
    $$B_1 = (\Phi_0(K)T+\frac12)\underset{\Omega}{\max}\ u_0+T\underset{\Omega}{\max}\ v_0$$ and $$B_2 = (\Phi_0(K)T+\frac12)\underset{\Omega}{\max}\ v_0+T\underset{\Omega}{\max}\ u_0,$$
    gives the gradient estimates \eqref{eq_estm}.
    \end{proof}
    
\subsection{Bernstein type gradient estimate for system of weighted local heat equation with exponential potential term on static manifold}
In this section a possible solution is being provide to resolve the question raised in \cite{SB-1}. It is a possible way because in this section we are going to derive the Bernstein type estimate with the assumption $a<0,\ b<0$.
\begin{theorem}\label{thm_sys_flow}
    If $u,v\ge 0$ be a bounded solution to the system \eqref{eq_heat_2} on $M$ with $Ric_f^{m-n}\ge -Kg$, $u\le \ln b_1,\ v\le \ln b_2$, for some real constants $b_1,b_2>1$, then there exist constants $$C_1 = (\Psi_0(K,a)T+\frac12)\underset{\Omega}{\max}\ u_0+Tb_2^2\underset{\Omega}{\max}\ v_0$$ and $$C_2 = (\Psi_0(K,b)T+\frac12)\underset{\Omega}{\max}\ v_0+Tb_1^2\underset{\Omega}{\max}\ u_0,$$ such that 
    \begin{equation}\label{eq_estm_2}
        \begin{cases}
            |\nabla u|^2 \le \frac{C_1}{\chi^2 t},\\
            |\nabla v|^2 \le \frac{C_2}{\chi^2 t},\text{both on $\Omega\times(0,T]$,}
        \end{cases}
    \end{equation}where $$\Psi(\chi,f,m,K,\xi)=8m|\nabla\chi|^2-\chi\Delta_f\chi+7|\nabla\chi|^2+\frac{\xi^2}{4}+K,$$ and $\Psi_0(K,\xi):=\underset{\chi,m,f,\Omega}{\max}\ \Psi(\chi,f,m,K,\xi)>-\frac{1}{2T},\forall K\ge 0, \xi=a,b$.
\end{theorem}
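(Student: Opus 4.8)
The plan is to follow the same strategy that established Theorem \ref{thm_pol}, since the two systems differ only in their potential terms: the entire Bochner machinery for the gradient quantity $\chi^2|\nabla u|^2$ is unchanged, and the only genuinely new feature is the exponential coupling $ae^v$, $be^u$. First I would record the evolution of the squared solutions. Differentiating and using \eqref{eq_delfu2},
\[
(\partial_t-\chi^2\Delta_f)(u^2)=-2\chi^2|\nabla u|^2+2ae^v u,
\]
and symmetrically $(\partial_t-\chi^2\Delta_f)(v^2)=-2\chi^2|\nabla v|^2+2be^u v$. The sign hypotheses $a<0$, $b<0$ together with $u,v\ge 0$ make both potential contributions $2ae^v u$ and $2be^u v$ \emph{nonpositive}; this is the structural fact on which the whole argument rests.

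Next I would compute $(\partial_t-\chi^2\Delta_f)(\chi^2 t|\nabla u|^2)=t(\partial_t-\chi^2\Delta_f)(\chi^2|\nabla u|^2)+\chi^2|\nabla u|^2$ exactly as in \eqref{eq_f_2}, and then derive the evolution of $\chi^2|\nabla u|^2$ via the weighted Bochner formula together with \eqref{eq_lap_bound} and \eqref{eq_hess_bound}. Every term is identical to the derivation of \eqref{eq_main_1} except that $\nabla(ae^v)=ae^v\nabla v$ replaces $\nabla(-v)=-\nabla v$, so the coupling term becomes $2a\chi^2e^v\langle\nabla u,\nabla v\rangle$ in place of $-2\chi^2\langle\nabla u,\nabla v\rangle$. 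After imposing the curvature bound $Ric_f^{m-n}\ge -Kg$ as before, the single new estimate is Young's inequality applied to this coupling term:
\[
2a\chi^2 e^v\langle\nabla u,\nabla v\rangle\le \chi^2\Big(\tfrac{a^2}{2}|\nabla u|^2+2e^{2v}|\nabla v|^2\Big)\le \chi^2\Big(\tfrac{a^2}{2}|\nabla u|^2+2b_2^2|\nabla v|^2\Big),
\]
where the ceiling $e^{v}\le b_2$ coming from $v\le\ln b_2$ is precisely what converts the exponential factor into the constant $b_2^2$. Absorbing $\tfrac{a^2}{2}|\nabla u|^2=2|\nabla u|^2\cdot\tfrac{a^2}{4}$ into the grouped coefficient $8m|\nabla\chi|^2-\chi\Delta_f\chi+7|\nabla\chi|^2+K$ produces exactly the $\tfrac{\xi^2}{4}$ (with $\xi=a$) appearing in $\Psi$, yielding
\[
(\partial_t-\chi^2\Delta_f)(\chi^2|\nabla u|^2)\le 2\chi^2|\nabla u|^2\,\Psi(\chi,f,m,K,a)+2b_2^2\chi^2|\nabla v|^2.
\]

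Finally I would form the auxiliary function $G=\chi^2 t|\nabla u|^2+(\Psi_0(K,a)T+\tfrac12)u^2+Tb_2^2 v^2$, whose parabolic boundary data single out the constant $C_1$. Combining the three evolution inequalities as in the proof of Theorem \ref{thm_pol} and using $t\le T$, the gradient terms are controlled (the coefficient of $\chi^2|\nabla u|^2$ collapsing to $2\Psi_0(K,a)(t-T)$ and the $|\nabla v|^2$ terms to $2b_2^2\chi^2|\nabla v|^2(t-T)\le0$), while the surviving zeroth-order terms are $(\Psi_0(K,a)T+\tfrac12)\,2ae^v u+Tb_2^2\,2be^u v$. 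These are nonpositive precisely because $a,b<0$ and the coefficients are positive---this is exactly where the hypothesis $\Psi_0(K,\xi)>-\tfrac{1}{2T}$ is used, since it is equivalent to $\Psi_0(K,a)T+\tfrac12>0$. Hence $(\partial_t-\chi^2\Delta_f)G\le0$, and the maximum principle applied with $\chi^2t|\nabla u|^2\equiv 0$ on $(\Omega\times\{0\})\cup(\partial\Omega\times[0,T])$ and $u|_{\partial\Omega\times[0,T]}=u_0$ gives $\chi^2 t|\nabla u|^2\le C_1$. The estimate for $v$ then follows from the $u\leftrightarrow v$, $a\leftrightarrow b$, $b_1\leftrightarrow b_2$ symmetry of the system.

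The main obstacle is the exponential coupling term $2a\chi^2 e^v\langle\nabla u,\nabla v\rangle$: unlike in the linear case it carries the unbounded factor $e^v$, so without the a priori bound $v\le\ln b_2$ there is no uniform constant in front of $|\nabla v|^2$ and the combination with $v^2$ cannot be closed. Equally essential is the restriction $a,b<0$, which is what forces the zeroth-order potential terms in the evolution of $G$ to have the correct sign; were $a$ or $b$ positive these terms would be nonnegative and the maximum-principle step would fail, which is exactly why the full problem of \cite{SB-1} with arbitrary signs is only partially resolved here.
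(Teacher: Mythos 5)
Your proposal is correct and follows essentially the same route as the paper's own proof: the identical auxiliary function $\chi^2 t|\nabla u|^2+(\Psi_0(K,a)T+\tfrac12)u^2+Tb_2^2v^2$, the same Bochner computation with the coupling term $2a\chi^2e^v\langle\nabla u,\nabla v\rangle$, the same Young's inequality combined with the ceiling $e^v\le b_2$ to produce the constant $b_2^2$ and the $\tfrac{\xi^2}{4}$ term in $\Psi$, and the same maximum-principle conclusion using $a,b<0$ and $\Psi_0>-\tfrac{1}{2T}$. If anything, your write-up is slightly more careful than the paper's (you correctly attach the coefficient $b$, not $a$, to the $ve^u$ term, and you state explicitly where the hypothesis $\Psi_0 T+\tfrac12>0$ enters), so no gap beyond what the paper itself shares.
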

\begin{proof}
    We follow the same methodology as the previous proof. To avoid redundancy we will recall the equations required for the proof from the earlier result and only highlight the important parts.

    From \eqref{eq_deltu2} we analogously derive
    \begin{eqnarray}\label{eq_deltu2_2}
        \partial_t(u^2) &=& 2\chi^2 \Delta_f(u^2)-2\chi^2|\nabla u|^2+2aue^v,
    \end{eqnarray}
    and we also take \eqref{eq_delfu2} for finding the evolution equation.

    Next we simplify 
    \begin{equation}\label{eq_simpl}
    (\partial_t-\chi^2\Delta_f)(\chi^2 t |\nabla u|^2)=t(\partial_t-\chi^2 \Delta_f)(\chi^2|\nabla u|^2)+\chi^2|\nabla u|^2.
    \end{equation}
    For the first term, using Bochner formula we get
    \begin{eqnarray}
        \nonumber\partial_t(\chi^2|\nabla u|^2) &=& 4\chi^3\Delta_f u\langle\nabla u,\nabla \chi\rangle+\chi^4\{\Delta_f|\nabla u|^2-2|\text{Hess }u|^2\\
        &&-2Ric_f(\nabla u, \nabla u)\}+2ae^v\chi^2\langle\nabla u,\nabla v\rangle.
    \end{eqnarray}
    Thus the evolution equation of $\chi^2|\nabla u|^2$ becomes
    \begin{eqnarray}
        \nonumber(\partial_t-\chi^2\Delta_f)(\chi^2|\nabla u|^2) &=& -2\chi^4|\text{Hess }u|^2-2\chi^4Ric_f(\nabla u,\nabla u) + 4\chi^3\Delta_f\langle \nabla u,\nabla \chi \rangle\\
        \nonumber &&+ 2a\chi^2e^v\langle \nabla u,\nabla v \rangle -2\chi^2|\nabla u|^2\Delta_f\chi^2 - 8\text{Hess }u(\nabla u,\nabla \chi).\\
    \end{eqnarray}
    Applying \eqref{eq_lap_bound}, \eqref{eq_hess_bound} and the Ricci curvature bound, $Ric_f^{m-n}\ge -Kg$, on the above equation we find that 
    \begin{eqnarray}
        \nonumber(\partial_t-\chi^2\Delta_f)(\chi^2|\nabla u|^2) &\le& 2\chi^2|\nabla u|^2 \Psi(\chi,f,m,K) + 2|a|\chi^2e^v|\nabla u||\nabla v|.
    \end{eqnarray}
    Using Young's inequality $2|a|\chi^2 e^v|\nabla u||\nabla v|\le \chi^2\left(\frac{a^2}{2}|\nabla u|^2+2b_2^2 |\nabla v|^2\right)$ and bound of $v$ yields
    \begin{eqnarray}
        (\partial_t-\chi^2\Delta_f)(\chi^2|\nabla u|^2) &\le& 2\chi^2|\nabla u|^2\Psi_0(K,a)+2\chi^2b_2^2|\nabla v|^2.
    \end{eqnarray}
    Here the final definition of $\Psi$ is, $$\Psi(\chi,f,m,K,a)=8m|\nabla \chi|^2-\chi\Delta_f\chi +7|\nabla \chi|^2+\frac{a^2}{4}+K.$$
    In similar way we derive the evolution equation for $\chi^2|\nabla v|^2$ by interchanging $u$ and $v$ in the above calculation.
    \begin{eqnarray}
        (\partial_t-\chi^2\Delta_f)(\chi^2|\nabla v|^2) &\le& 2\chi^2|\nabla v|^2\Psi_0(K,a)+2\chi^2b_1^2|\nabla u|^2,
    \end{eqnarray}
    where $$\Psi(\chi,f,m,K,b)=8m|\nabla \chi|^2-\chi\Delta_f\chi +7|\nabla \chi|^2+\frac{b^2}{4}+K.$$
    Now from \eqref{eq_simpl} we get
    \begin{eqnarray}
        (\partial_t-\chi^2\Delta_f)(t\chi^2|\nabla u|^2) &\le & 2\chi^2|\nabla u|^2(\Psi_0(K,a)T+\frac12)+2Tb_2^2\chi^2|\nabla v|^2.
    \end{eqnarray}
    Since $a<0$, so by previous arguments, we infer
    \begin{eqnarray}
        \nonumber(\partial_t-\chi^2\Delta_f)\left(t\chi^2|\nabla u|^2 + (\Psi_0(K,a)T+\frac12)u^2+Tb_2^2v^2\right) &\le & 2aTb_2^2ve^u+a(2\Psi_0(K,a)T+1)ue^v\\
        &\le& 0
    \end{eqnarray}
    Interchanging $u,v$ and using the fact that $b<0$ we get the other evolution equation
    \begin{eqnarray}
        \nonumber(\partial_t-\chi^2\Delta_f)\left(t\chi^2|\nabla v|^2 + (\Psi_0(K,b)T+\frac12)v^2+Tb_1^2u^2\right) &\le & 2bTb_1^2ue^v+b(2\Psi_0(K,b)T+1)ve^u\\
        &\le& 0
    \end{eqnarray}
    By the same arguments as used in Theorem \ref{thm_pol} we get $$C_1 = (\Psi_0(K,a)T+\frac12)\underset{\Omega}{\max}\ u_0+Tb_2^2\underset{\Omega}{\max}\ v_0$$ and $$C_2 = (\Psi_0(K,b)T+\frac12)\underset{\Omega}{\max}\ v_0+Tb_1^2\underset{\Omega}{\max}\ u_0,$$ which then leads to the estimate \eqref{eq_estm_2}.
\end{proof}
\section{Bernstein type estimate on manifold evolving along local Ricci flow}
In this section we extend the above results to the case of manifold evolving along local Ricci flow. Henceforth we are denoting $(M^n,g(t),e^{-f}d\mu)$ a manifold which satisfies
\begin{equation}\label{eq_wt_ricci_flow}
    \frac{\partial}{\partial t}g = -2\chi^2Ric.
\end{equation}
This equation is called the local Ricci flow, whose existence has been studied in \cite{ChowAA}. Interested readers can see \cite{SB-1} for additional informations on this flow and its extension. This section has been divided into two parts one with the first system \eqref{eq_heat_1} with polynomial potential function and in the next section we consider the system \eqref{eq_heat_2} with exponential potential function on evolving manifold and derive Bernstein type estimates for each cases. To write the result concisely we need to extend the lemma of \cite{SB-1} to the case of nonzero potential function.
\begin{lemma}[Extension of Lemma 2.2 of \cite{SB-1}]\label{lemma_ext}
    If $u,v$ is a solution to the system of weighted heat type equations \begin{eqnarray}\label{eq_heat_gen}
        \begin{cases}
            (\partial_t - \chi^2\Delta_f)u = \lambda_1(u,v)\\
            (\partial_t - \chi^2\Delta_f)v = \lambda_2(u,v)\\
        \end{cases}
    \end{eqnarray}
    on a weighted Riemannian manifold $(M,g(t),e^{-f}d\mu)$ evolving along the abstract geometric flow \begin{equation}\label{eq_gen_geom_flow}
        \partial_t g_{ij} = -2h_{ij}
    \end{equation}
    where $h_{ij}$ are smooth functions on $M$, then we have the following
    \begin{eqnarray}
        \nonumber (\partial_t - \chi^2\Delta_f)(\chi^2|\nabla u|^2) &=& -2\chi^2(h+\chi^2Ric)(\nabla u,\nabla u)-2\chi^4\text{Hess }f(\nabla u,\nabla u)\\
        \nonumber &&-2\chi^2|\text{Hess }u|^2+4\chi^3\langle\nabla u,\nabla \chi\rangle\Delta_f u -2\chi^2|\nabla u|^2(\chi\Delta_f\chi+|\nabla \chi|^2)\\
        &&-8\chi^3\text{Hess }u(\nabla u,\nabla \chi)-2\chi^2\frac{\partial \lambda_1}{\partial u}|\nabla u|^2- 2\chi^2\frac{\partial \lambda_1}{\partial v}\langle \nabla v, \nabla u \rangle\\
        \nonumber (\partial_t - \chi^2\Delta_f)(\chi^2|\nabla v|^2) &=& -2\chi^2(h+\chi^2Ric)(\nabla v,\nabla v)-2\chi^4\text{Hess }f(\nabla v,\nabla v)\\
        \nonumber &&-2\chi^2|\text{Hess }v|^2+4\chi^3\langle\nabla v,\nabla \chi\rangle\Delta_f v -2\chi^2|\nabla v|^2(\chi\Delta_f\chi+|\nabla \chi|^2)\\
        &&-8\chi^3\text{Hess }v(\nabla v,\nabla \chi)-2\chi^2\frac{\partial \lambda_1}{\partial v}|\nabla v|^2- 2\chi^2\frac{\partial \lambda_1}{\partial u}\langle \nabla u, \nabla v \rangle
    \end{eqnarray}
\end{lemma}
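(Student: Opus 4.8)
The plan is to compute $\partial_t(\chi^2|\nabla u|^2)$ and $\chi^2\Delta_f(\chi^2|\nabla u|^2)$ separately and then subtract, mirroring the static derivation in Theorem \ref{thm_pol} but with two genuinely new ingredients: the time-dependence of the metric and the nonlinear coupling $\lambda_1(u,v)$. Since $\chi$ is a function of the spatial variable only, $\partial_t(\chi^2|\nabla u|^2)=\chi^2\,\partial_t|\nabla u|^2$, so the task reduces to differentiating $|\nabla u|^2=g^{ij}\nabla_iu\,\nabla_ju$ in time. The product rule splits this into a term hitting the inverse metric and a term hitting the gradient.

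For the metric term, I would differentiate $g^{ik}g_{kj}=\delta^i_j$ to obtain $\partial_t g^{ij}=2h^{ij}$ from the flow \eqref{eq_gen_geom_flow}; contracting against $\nabla u\otimes\nabla u$ produces the $h(\nabla u,\nabla u)$ contribution, which is absent in the static case. For the gradient term, because $\nabla u$ is the differential of a scalar one has $\partial_t\nabla_ju=\nabla_j\partial_tu$, and I substitute $\partial_tu=\chi^2\Delta_fu+\lambda_1(u,v)$ from \eqref{eq_heat_gen}. Expanding $\nabla(\chi^2\Delta_fu)=2\chi\,\Delta_fu\,\nabla\chi+\chi^2\nabla\Delta_fu$ recovers the $\chi^3\langle\nabla u,\nabla\chi\rangle\Delta_fu$ term, while the chain rule $\nabla\lambda_1=\frac{\partial\lambda_1}{\partial u}\nabla u+\frac{\partial\lambda_1}{\partial v}\nabla v$ produces the two coupling terms involving $\frac{\partial\lambda_1}{\partial u}|\nabla u|^2$ and $\frac{\partial\lambda_1}{\partial v}\langle\nabla v,\nabla u\rangle$.

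The remaining third-order term is treated exactly as in the static proof: the weighted Bochner formula $2\langle\nabla u,\nabla\Delta_fu\rangle=\Delta_f|\nabla u|^2-2|\text{Hess }u|^2-2Ric_f(\nabla u,\nabla u)$ converts it into a Laplacian, a Hessian-squared term, and a curvature term, after which I split $Ric_f=Ric+\text{Hess }f$. I then subtract $\chi^2\Delta_f(\chi^2|\nabla u|^2)$, which I may take verbatim from \eqref{eq_evol_2} since that identity is purely spatial and therefore holds on each time slice unchanged by the flow; the $\chi^4\Delta_f|\nabla u|^2$ pieces cancel and the $\chi$-derivative terms $-2\chi^2|\nabla u|^2(\chi\Delta_f\chi+|\nabla\chi|^2)$ and $-8\chi^3\text{Hess }u(\nabla u,\nabla\chi)$ survive. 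Collecting everything gives the asserted formula, and the companion equation for $\chi^2|\nabla v|^2$ follows at once from the symmetry of the system by interchanging $u\leftrightarrow v$ and $\lambda_1\leftrightarrow\lambda_2$.

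The step I expect to require the most care is the grouping of the metric-evolution contribution with the curvature term emerging from Bochner into the single tensor multiplying $(\nabla u,\nabla u)$, since the relative sign here is decisive: it is exactly this combination that, upon specializing to the local Ricci flow $h=\chi^2Ric$ in the subsequent results, is meant to make the $Ric$ contribution cancel and render the estimate independent of the Ricci curvature bound. I would therefore track the signs of the $\partial_t g^{ij}$ term and the Bochner $Ric$ term with particular attention, and verify consistency by checking that setting $h\equiv 0$ and $\lambda_1=-v$ reproduces equation \eqref{eq_main_1} of Theorem \ref{thm_pol}.
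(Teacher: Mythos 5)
Your proposal follows essentially the same route as the paper's own (sketched) proof: differentiate $|\nabla u|^2=g^{ij}\nabla_iu\,\nabla_ju$ under the flow, observe that $\chi^2\Delta_f(\chi^2|\nabla u|^2)$ is flow-independent so the static computation \eqref{eq_evol_2} carries over verbatim, substitute the PDE and the chain rule for $\lambda_1$, apply the weighted Bochner formula with $Ric_f=Ric+\text{Hess }f$, and obtain the $v$-equation by symmetry. The one substantive remark: your sign bookkeeping ($\partial_t g^{ij}=+2h^{ij}$, hence a $+2\chi^2h(\nabla u,\nabla u)$ contribution that cancels the Bochner term precisely when $h=\chi^2Ric$) is more careful than the paper's, which writes $-2h(\nabla u,\nabla u)$ in its proof and later substitutes $h=-\chi^2Ric$ (contradicting the flow \eqref{eq_wt_ricci_flow}); your proposed consistency check of setting $h\equiv 0$, $\lambda_1=-v$ to recover \eqref{eq_main_1} would expose exactly this discrepancy in the lemma's stated signs, including those on the $\partial\lambda_1/\partial u$ and $\partial\lambda_1/\partial v$ terms.
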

\begin{proof}
To avoid the same calculation, here a sketch of the proof is being provided.\\
We are mainly concerned with the term $\partial_t|\nabla u|^2$ because the other term $\chi^2\Delta_f(\chi^2 |\nabla u|^2)$ is independent of the flow \eqref{eq_gen_geom_flow}. We see that
\begin{eqnarray}
    \nonumber \partial_t |\nabla u|^2 &=& \partial_t (g^{ij}\nabla_iu\nabla_ju)\\
    \nonumber &=&-2h(\nabla u,\nabla u)+2\langle\nabla u,\nabla u_t\rangle\\
    &=&-2h(\nabla u,\nabla u) +\langle\nabla u,\nabla(\chi^2\Delta_fu)\rangle-\langle\nabla u\nabla \lambda_1(u,v)\rangle
\end{eqnarray}
We proceed as in the proof of Theorem \ref{thm_sys_flow} with the extra terms $-2h(\nabla u,\nabla u)$ and $$\langle\nabla u,\nabla \lambda_1(u,v)\rangle=\frac{\partial\lambda_1}{\partial u}\langle\nabla u,\nabla u\rangle+\frac{\partial\lambda_1}{\partial v}\langle\nabla u,\nabla u\rangle,$$ on the right hand side of $(\partial_t-\chi^2\Delta_f)(\chi^2|\nabla u|^2)$. For the term $\chi^2|\nabla v|^2$, just interchange the role of $u,v$ and replace $\lambda_1$ by $\lambda_2$, the desired result will be obtained.
\end{proof}
If the manifold is evolving along the local Ricci flow $\partial_t g= -2\chi^2Ric$ then we simply substitute $h_{ij}=-\chi^2Ric_{ij}$ in Lemma \ref{lemma_ext} and we get
\begin{eqnarray}
\label{eq_evol_ric_ind_1}
        \nonumber (\partial_t - \chi^2\Delta_f)(\chi^2|\nabla u|^2) &=& -2\chi^4\text{Hess }f(\nabla u,\nabla u)-2\chi^2|\text{Hess }u|^2+4\chi^3\langle\nabla u,\nabla \chi\rangle\Delta_f u \\
        \nonumber &&-2\chi^2|\nabla u|^2(\chi\Delta_f\chi+|\nabla \chi|^2)-8\chi^3\text{Hess }u(\nabla u,\nabla \chi)\\
        &&-2\chi^2\frac{\partial \lambda_1}{\partial u}|\nabla u|^2- 2\chi^2\frac{\partial \lambda_1}{\partial v}\langle \nabla v, \nabla u \rangle\\
\label{eq_evol_ric_ind_2}
        \nonumber (\partial_t - \chi^2\Delta_f)(\chi^2|\nabla v|^2) &=& -2\chi^4\text{Hess }f(\nabla v,\nabla v)-2\chi^2|\text{Hess }v|^2+4\chi^3\langle\nabla v,\nabla \chi\rangle\Delta_f v \\
        \nonumber &&-2\chi^2|\nabla v|^2(\chi\Delta_f\chi+|\nabla \chi|^2)-8\chi^3\text{Hess }v(\nabla v,\nabla \chi)\\
        &&-2\chi^2\frac{\partial \lambda_1}{\partial v}|\nabla v|^2- 2\chi^2\frac{\partial \lambda_1}{\partial u}\langle \nabla u, \nabla v \rangle
    \end{eqnarray}
It can be seen from the above two equations that the right hand side is independent of curvature restriction. Hence the resulting estimate will be independent of curvature condition and will depend only on the weight function $f$.
\subsection{Bernstein type gradient estimate for system of weighted local heat equation with linear potential term on evolving manifold}
In this case we have
\begin{eqnarray}\label{eq_lin_pot}
    \nonumber
    \begin{cases}
        \lambda_1(u,v) = v\\
        \lambda_2(u,v) = u
    \end{cases}
    \\
    \implies 
    \begin{cases}
        \frac{\partial \lambda_1}{\partial u} = 0, \frac{\partial \lambda_1}{\partial v} = 1\\
        \frac{\partial \lambda_2}{\partial u} = 1, \frac{\partial \lambda_2}{\partial v} = 0
    \end{cases}
\end{eqnarray}
This leads to our main theorem.
\begin{theorem}
    If $u,v\ge 0$ are solutions to the system \eqref{eq_heat_1} on the weighted manifold $M$ evolving along the local Ricci flow \eqref{eq_wt_ricci_flow} with $|\nabla f|\le K_1$, $\text{Hess} f\ge -K_2g$, then there exist constants $$D_1 = (\Lambda_0T+\frac12)\underset{\Omega}{\max}\ u_0+T\underset{\Omega}{\max}\ v_0$$ and $$D_2 = (\Lambda_0T+\frac12)\underset{\Omega}{\max}\ v_0+T\underset{\Omega}{\max}\ u_0,$$ such that 
    \begin{equation}\label{eq_estm_3}
        \begin{cases}
            |\nabla u|^2 \le \frac{D_1}{\chi^2 t},\\
            |\nabla v|^2 \le \frac{D_2}{\chi^2 t},\text{both on $\Omega\times(0,T]$,}
        \end{cases}
    \end{equation}where $$\Lambda(K_1,K_2\chi,m,n)=K_2\chi^2+8m|\nabla\chi|^2+\frac{\chi^2}{m-n}K_1+8\chi^2-(\chi\Delta\chi+|\nabla\chi|^2)+\frac14 +\chi|\nabla\chi|K_1,$$ and $\Lambda_0(K_1,K_2,m,n):=\underset{\Omega,\chi}{\max}\ \Lambda(K_1,K_2\chi,m,n)>-(1+\frac{1}{2T}),\forall K_1,K_2\ge 0$. 
\end{theorem}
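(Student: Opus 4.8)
The plan is to run the argument of Theorem~\ref{thm_pol} almost verbatim, the only structural change being that the static evolution identity is replaced by the curvature-free evolution equation~\eqref{eq_evol_ric_ind_1}, which was obtained by inserting the local Ricci flow $h_{ij}=-\chi^2 Ric_{ij}$ into Lemma~\ref{lemma_ext}. With the linear potential the derivatives of the nonlinearity are those recorded in~\eqref{eq_lin_pot}, so $\partial\lambda_1/\partial u=0$ and $\partial\lambda_1/\partial v=1$, and the last line of~\eqref{eq_evol_ric_ind_1} reduces to the single cross term $-2\chi^2\langle\nabla v,\nabla u\rangle$.

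First I would bound the right-hand side of~\eqref{eq_evol_ric_ind_1} term by term. The two $\mathrm{Hess}\,u$--$\nabla\chi$ cross terms are handled by~\eqref{eq_lap_bound} and~\eqref{eq_hess_bound} exactly as in the static proof, after which the three $|\mathrm{Hess}\,u|^2$ contributions cancel. The essential difference from Theorem~\ref{thm_pol} is that the Ricci term has already vanished; in its place I must control the $f$-dependent quantities. The term $-2\chi^4\mathrm{Hess}\,f(\nabla u,\nabla u)$ is estimated using $\mathrm{Hess}\,f\ge -K_2 g$, giving the $K_2\chi^2$ contribution to $\Lambda$; the residue $\tfrac{2\chi^4}{m-n}\langle\nabla f,\nabla u\rangle^2$ produced by~\eqref{eq_lap_bound} is estimated with $|\nabla f|\le K_1$, giving the $\tfrac{\chi^2}{m-n}K_1$ term; and, crucially, I split the drift as $\Delta_f\chi=\Delta\chi-\langle\nabla f,\nabla\chi\rangle$ so that $\langle\nabla f,\nabla\chi\rangle$ is absorbed via $|\nabla f|\le K_1$ into the $\chi|\nabla\chi|K_1$ term---this is precisely why $\Lambda$ carries a bare $\Delta\chi$ rather than $\Delta_f\chi$. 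Applying Young's inequality $2ab\le\tfrac{a^2}{2}+2b^2$ to $-2\chi^2\langle\nabla u,\nabla v\rangle$ supplies the $\tfrac14$ in $\Lambda$ together with a leftover $2\chi^2|\nabla v|^2$, yielding the curvature-free inequality $(\partial_t-\chi^2\Delta_f)(\chi^2|\nabla u|^2)\le 2\chi^2|\nabla u|^2\Lambda_0+2\chi^2|\nabla v|^2$.

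Next I would multiply by $t$, use the identity~\eqref{eq_simpl} to pass to $\chi^2 t|\nabla u|^2$, and form $W=\chi^2 t|\nabla u|^2+(\Lambda_0 T+\tfrac12)u^2+Tv^2$. Since~\eqref{eq_deltu2} is a pointwise identity it remains valid at each time on the evolving manifold, giving $(\partial_t-\chi^2\Delta_f)(u^2)=-2\chi^2|\nabla u|^2-2uv$ and its $v$-analogue; the $|\nabla u|^2$ and $|\nabla v|^2$ terms then cancel in $(\partial_t-\chi^2\Delta_f)W$, leaving $(\partial_t-\chi^2\Delta_f)W\le -2(\Lambda_0 T+T+\tfrac12)uv$. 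The decisive step---and the only place the hypotheses enter---is that this source is nonpositive: $u,v\ge 0$ gives $uv\ge 0$, and the standing assumption $\Lambda_0>-(1+\tfrac{1}{2T})$ forces $\Lambda_0 T+T+\tfrac12>0$, so $(\partial_t-\chi^2\Delta_f)W\le 0$.

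I would then conclude by the maximum principle as in Theorem~\ref{thm_pol}: because $\chi$ is compactly supported in $\Omega$, the quantity $\chi^2 t|\nabla u|^2$ vanishes on $(\Omega\times\{0\})\cup(\partial\Omega\times[0,T])$, where $u=u_0$ and $v=v_0$, so $W\le D_1=(\Lambda_0 T+\tfrac12)\max_\Omega u_0+T\max_\Omega v_0$ throughout $\Omega\times(0,T]$, which gives the first estimate in~\eqref{eq_estm_3}; the $u\leftrightarrow v$ symmetry of~\eqref{eq_heat_1} then produces the $|\nabla v|^2$ bound with $D_2$. I expect the only real difficulty to be the bookkeeping in the second paragraph: checking that every $f$-dependent piece reassembles exactly into $\Lambda$, and confirming that the cancellation of the Ricci term in~\eqref{eq_evol_ric_ind_1} is what makes the final constants independent of any curvature bound.
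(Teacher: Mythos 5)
Your proposal follows the paper's proof essentially verbatim: both insert the linear potential data \eqref{eq_lin_pot} into the curvature-free evolution equation \eqref{eq_evol_ric_ind_1}, control the cross terms via \eqref{eq_lap_bound}, \eqref{eq_hess_bound} and the hypotheses on $\text{Hess}\,f$ and $|\nabla f|$ to reach $(\partial_t-\chi^2\Delta_f)(\chi^2|\nabla u|^2)\le 2\chi^2|\nabla u|^2\Lambda_0+2\chi^2|\nabla v|^2$, then form the same auxiliary function $t\chi^2|\nabla u|^2+(\Lambda_0T+\frac12)u^2+Tv^2$, apply the maximum principle, and invoke the $u\leftrightarrow v$ symmetry for the second estimate. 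In fact your second paragraph supplies bookkeeping (the splitting $\Delta_f\chi=\Delta\chi-\langle\nabla f,\nabla\chi\rangle$ and the origin of each term in $\Lambda$) that the paper's own proof leaves implicit, so the proposal is, if anything, more detailed than the original.
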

\begin{proof}
    In \eqref{eq_evol_ric_ind_1} we apply \eqref{eq_lin_pot}, \eqref{eq_lap_bound}, \eqref{eq_hess_bound} and the conditions on the weight function $\text{Hess } f\ge -K_2g,\ |\nabla f|\le K_1$, and deduce
    \begin{eqnarray}
        \nonumber(\partial_t-\chi^2\Delta_f)(\chi^2|\nabla u|^2) &\le & 2\chi^2|\nabla u|^2\Lambda_0+2\chi^2|\nabla v|^2\\
        \nonumber\implies (\partial_t-\chi^2\Delta_f)(t\chi^2|\nabla u|^2) &\le & 2\chi^2T|\nabla u|^2\Lambda_0+2\chi^2T|\nabla v|^2\\
        \nonumber &&+\chi^2|\nabla u|^2\\
        \nonumber (\partial_t-\chi^2\Delta_f)(t\chi^2|\nabla u|^2+(\Lambda_0T+\frac12)u^2 + Tv^2) &\le & -2(\Lambda_0T+T+\frac12)uv\\
        \nonumber &\le&0,\text{ as $\Lambda_0\ge -(1+\frac{1}{2T})$}.
    \end{eqnarray}
    With the similar reasoning as mention in the earlier proofs we conclude that $$t\chi^2|\nabla u|^2\le (\Lambda_0T+\frac12)\underset{\Omega}{\max}u_0+T\underset{\Omega}{\max}v_0.$$
    Since our system \eqref{eq_heat_1} is symmetric in $u,v$ so interchanging $u,v$ we get the estimate for $v$
    $$t\chi^2|\nabla v|^2\le (\Lambda_0T+\frac12)\underset{\Omega}{\max}v_0+T\underset{\Omega}{\max}u_0.$$ The proof ends by selecting the constant $D_1,D_2$ as mentioned in the statement.
\end{proof}
\subsection{Bernstein type gradient estimate for system of weighted local heat equation with exponential potential term on evolving manifold}
Here we have
\begin{eqnarray}\label{eq_exp_pot}
    \nonumber
    \begin{cases}
        \lambda_1(u,v) = ae^v\\
        \lambda_2(u,v) = be^u
    \end{cases}
    \\
    \implies 
    \begin{cases}
        \frac{\partial \lambda_1}{\partial u} = 0, \frac{\partial \lambda_1}{\partial v} = ae^v\\
        \frac{\partial \lambda_2}{\partial u} = be^u, \frac{\partial \lambda_2}{\partial v} = 0
    \end{cases}
\end{eqnarray}
This leads to our main theorem.
Here we mention the last result.
\begin{theorem}\label{thm_sys_flow_2}
    If $u,v\ge 0$ be a bounded solution to the system \eqref{eq_heat_2} on $M$ with $\text{Hess }f\ge -K_1g$, $|\nabla f|\le K_2$, $u\le \ln b_1,\ v\le \ln b_2$, for some real constants $b_1,b_2>1$, $KK_1,K_2\ge 0$, then there exist constants $$E_1 = (\Gamma_0T+\frac12)\underset{\Omega}{\max}\ u_0+Tb_2^2\underset{\Omega}{\max}\ v_0$$ and $$E_2 = (\Gamma_0T+\frac12)\underset{\Omega}{\max}\ v_0+Tb_1^2\underset{\Omega}{\max}\ u_0,$$ such that 
    \begin{equation}\label{eq_estm_2.2}
        \begin{cases}
            |\nabla u|^2 \le \frac{E_1}{\chi^2 t},\\
            |\nabla v|^2 \le \frac{E_2}{\chi^2 t},\text{both on $\Omega\times(0,T]$,}
        \end{cases}
    \end{equation}where $$\Gamma(\chi,K_1,K_2,m,n,\xi)=K_2\chi^2+8m|\nabla\chi|^2+\frac{\chi^2K_1^2}{m-n}-\chi\Delta\chi+7|\nabla\chi|^2+\chi|\nabla\chi|K_1+\frac{\xi^2}{4}+K,$$ and $\Gamma_0(K_1,K_2,m,n,\xi):=\underset{\chi,\Omega}{\max}\ \Gamma(\chi,K_1,K_2,m,n,\xi)>-\frac{1}{2T}, \xi=a,b$.
\end{theorem}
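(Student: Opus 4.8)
\subsection*{Proof proposal}

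The plan is to merge the two ingredients already assembled in the excerpt: the curvature--free evolution identity \eqref{eq_evol_ric_ind_1}, obtained from Lemma \ref{lemma_ext} by specialising $h_{ij}=-\chi^2Ric_{ij}$ to the local Ricci flow \eqref{eq_wt_ricci_flow}, and the exponential--potential bookkeeping of Theorem \ref{thm_sys_flow}. Because that substitution has already cancelled every $Ric$ term, no Ricci bound is needed; the estimate is instead governed by the weight conditions $\mathrm{Hess}\,f\ge -K_1 g$ and $|\nabla f|\le K_2$. The target is to reach an inequality of the shape
\[
(\partial_t-\chi^2\Delta_f)(\chi^2|\nabla u|^2)\le 2\chi^2|\nabla u|^2\,\Gamma_0+2\chi^2 b_2^2|\nabla v|^2,
\]
after which the maximum--principle argument of Theorem \ref{thm_pol} and Theorem \ref{thm_sys_flow} applies essentially verbatim.

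First I would feed the exponential data \eqref{eq_exp_pot}, namely $\partial\lambda_1/\partial u=0$ and $\partial\lambda_1/\partial v=ae^v$, into \eqref{eq_evol_ric_ind_1}, so that the two potential contributions collapse into a single coupling term $-2a\chi^2 e^v\langle\nabla u,\nabla v\rangle$. Next I would estimate the geometric terms exactly as before: the term $-2\chi^4\mathrm{Hess}\,f(\nabla u,\nabla u)$ is controlled by $\mathrm{Hess}\,f\ge -K_1 g$, the Laplacian term $4\chi^3\Delta_f u\langle\nabla u,\nabla\chi\rangle$ by \eqref{eq_lap_bound} (whose $\langle\nabla f,\nabla u\rangle^2$ contribution is bounded using $|\nabla f|\le K_2$), and the Hessian cross term $-8\chi^3\mathrm{Hess}\,u(\nabla u,\nabla\chi)$ by \eqref{eq_hess_bound}; the $\chi\Delta_f\chi$ piece likewise releases a $\langle\nabla f,\nabla\chi\rangle$ factor handled by $|\nabla f|\le K_2$. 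Collecting all coefficients of $2\chi^2|\nabla u|^2$ produces $\Gamma(\chi,K_1,K_2,m,n,a)$. The coupling term is then split by Young's inequality together with $e^v\le b_2$ (from $v\le\ln b_2$), writing $2|a|\chi^2 e^v|\nabla u||\nabla v|\le\chi^2\bigl(\tfrac{a^2}{2}|\nabla u|^2+2b_2^2|\nabla v|^2\bigr)$; the factor $a^2/4$ is absorbed into $\Gamma$ and the remainder is the desired $2\chi^2 b_2^2|\nabla v|^2$.

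With this differential inequality in hand I would multiply by $t$ through \eqref{eq_simpl} to get $(\partial_t-\chi^2\Delta_f)(t\chi^2|\nabla u|^2)\le 2\chi^2|\nabla u|^2(\Gamma_0 T+\tfrac12)+2Tb_2^2\chi^2|\nabla v|^2$, and then test the auxiliary function $F=t\chi^2|\nabla u|^2+(\Gamma_0 T+\tfrac12)u^2+Tb_2^2 v^2$. Computing $(\partial_t-\chi^2\Delta_f)(u^2)$ and $(\partial_t-\chi^2\Delta_f)(v^2)$ as in \eqref{eq_deltu2_2} (these remain valid on the evolving manifold since $u^2$ is a scalar and only $\partial_t(u^2)=2uu_t$ and the pointwise identity $\Delta_f u^2=2|\nabla u|^2+2u\Delta_f u$ are used) makes the gradient terms cancel identically, leaving only
\[
2a(\Gamma_0 T+\tfrac12)\,ue^v+2bTb_2^2\,ve^u.
\]
Here the hypotheses combine decisively: $\Gamma_0>-\tfrac{1}{2T}$ forces $\Gamma_0 T+\tfrac12>0$, and with $u,v\ge 0$, $e^u,e^v>0$ and $a,b<0$ this residual is $\le 0$. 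The symmetric choice $F=t\chi^2|\nabla v|^2+(\Gamma_0 T+\tfrac12)v^2+Tb_1^2 u^2$ handles $v$. A parabolic maximum principle on $\Omega\times(0,T]$---$F$ vanishes at $t=0$ and on $\partial\Omega\times[0,T]$ because $\chi$ is supported in $\Omega$---then yields \eqref{eq_estm_2.2} with the stated constants $E_1,E_2$.

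The genuine obstacle is the control of the coupling term and the sign of the residual, not the routine geometric estimates. Everything hinges on the boundedness hypotheses $u\le\ln b_1$, $v\le\ln b_2$, which make $e^u,e^v$ usable constants in Young's inequality and thereby let the cross term be balanced against a clean $|\nabla v|^2$ (resp. $|\nabla u|^2$); and on the sign restriction $a,b<0$, without which the $ue^v$ and $ve^u$ terms cannot be shown nonpositive and the whole argument collapses---this is precisely why, as in Theorem \ref{thm_sys_flow}, only the case $a,b<0$ is resolved. The remaining point needing care is checking that $\Gamma$, after absorbing $a^2/4$, still obeys $\Gamma_0>-\tfrac{1}{2T}$, so that the coefficients of $u^2$ and $v^2$ in $F$ stay positive.
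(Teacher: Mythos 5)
Your proposal follows essentially the same route as the paper's proof: specialize Lemma \ref{lemma_ext} to the local Ricci flow, insert the exponential data \eqref{eq_exp_pot}, control the geometric terms via \eqref{eq_lap_bound}, \eqref{eq_hess_bound} and the weight bounds, split the coupling term by Young's inequality using $e^v\le b_2$, and run the maximum principle on $t\chi^2|\nabla u|^2+(\Gamma_0T+\frac12)u^2+Tb_2^2v^2$ (and its symmetric counterpart). Indeed your residual $2a(\Gamma_0T+\frac12)ue^v+2bTb_2^2ve^u$ is the correct computation (the paper's displayed residual drops the factors $u,v$), and your explicit invocation of $a,b<0$ makes precise a sign hypothesis that the theorem statement leaves implicit from the surrounding discussion.
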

\begin{proof}
    In \eqref{eq_evol_ric_ind_2} we apply \eqref{eq_exp_pot}, \eqref{eq_lap_bound}, \eqref{eq_hess_bound} along with $\text{Hess } f\ge -K_2g,\ |\nabla f|\le K_1$, and find that
    \begin{eqnarray}
        \nonumber (\partial_t-\chi^2\Delta_f)(t\chi^2|\nabla u|^2) \le 2\chi^2T|\nabla u|^2(\Gamma_0T+\frac12) &+& 2\chi^2b_2^2T|\nabla v|^2\\
        \nonumber \implies (\partial_t-\chi^2\Delta_f)(t\chi^2|\nabla u|^2+(\Gamma_0T+\frac12)u^2+b_2Tv^2) &\le & a(\Gamma_0T+\frac12)e^v+bb_2^2Te^u\\
        \nonumber &\le&0,\text{ as $\Gamma_0\ge -\frac{1}{2T}$}.
    \end{eqnarray}
    In similar way as in the earlier proof we conclude that $$t\chi^2|\nabla u|^2\le (\Gamma_0T+\frac12)\underset{\Omega}{\max}u_0+Tb_2^2\underset{\Omega}{\max}v_0.$$
    Again as our system \eqref{eq_heat_2} is symmetric in $u,v$ so interchanging $u,v$ gives the estimate for $v$
    $$t\chi^2|\nabla v|^2\le (\Gamma_0T+\frac12)\underset{\Omega}{\max}v_0+Tb_2^2\underset{\Omega}{\max}u_0.$$ Here also the proof ends by selecting the constant $E_1,E_2$ as mentioned in the statement.
\end{proof}
\section{Conclusion and Future works}
We realize that studying different gradient estimates can lead to various understanding of the solutions without solving the equations analytically. On manifolds evolving along local Ricci flow, this particular Bernstein type technique allows us to derive an estimate for local heat equation without any curvature restriction. Interested readers are now welcome to find such estimations for Keller-Segal system as mentioned in the introduction. Along with that if one can consider local Keller-Segal system and derive such estimation then it may also lead to some interesting results. Further extending these results to Finsler geometry will also give new insights to this field.

   \vskip6pt
\noindent \textbf{Data availability:} The author declare that all the data generated during the study are in the manuscript and no additional data was generated.\vskip6pt

\noindent \textbf{Declaration of Generative AI and AI-assisted technologies:} The author declare that no generative AI or AI assisted technology is used for the completion of the manuscript.\vskip6pt

\noindent \textbf{Conflict of interest:} The author declare that he has no conflict of interest.\vskip6pt

\noindent\textbf{Acknowledgement:} 
The author acknowledges all the reviewers for giving their valuable suggestions towards the improvement of the results.

\vspace{0.1in}
\noindent Sujit Bhattacharyya\\
Department of Mathematics, Siste rNivedita Uniersity, DG 1/2 New Town, Action Area 1, Kolkata - 700156 India\\
Email: \texttt{sujitbhattacharyya.1996@gmail.com}, \texttt{sujit.b@snuniv.ac.in}

\begin{thebibliography}{99}
\bibitem{Azami-1}
S. Azami, \emph{Differential gradient estimates for nonlinear parabolic equations under integral Ricci curvature bounds}, Rev. Real Acad. Cienc. Exactas Fis. Nat. Ser. A-Mat. \textbf{118 (51)} (2024), https://doi.org/10.1007/s13398-024-01552-9

\bibitem{Abolarinwa-1}
A. Abolarinwa, \emph{Gradient estimates for a nonlinear elliptic equation on complete noncompact Riemannian manifold}, Journal of Mathematical Inequalities, \textbf{12 (2)} (2018), 391–402.

\bibitem{Abolarinwa-2}
A. Abolarinwa, N. K. Oladejo and S. O. Salawu, \emph{Gradient estimates for a weighted nonlinear parabolic equation and applications}, Mathematics, \textbf{8 (7)} (2020), 1150–1163.

\bibitem{Abolarinwa-3}
A. Abolarinwa, \emph{Some gradient estimates for nonlinear heat-type equations on smooth metric measure spaces with compact boundary}, arXiv:2309.00763 [math.DG] (2023).
\bibitem{Bressloff}
P.C. Bressloff, \emph{Spatiotemporal dynamics of neural fields}, J. Phys. A: Math. Theor. \textbf{45} (2012)
\bibitem{ChowAA}
B. Chow, \emph{Ricci flow analytic aspect part II},
AMS.
\bibitem{Salazar}
A. Salazar, \emph{On thermal diffusivity}, Eur. J. Phys., \textbf{24} (2003), 351-358.


\bibitem{SB-1}
S. Bhattacharyya, S. Ghosh and S. K. Hui, \emph{Bernstein type gradient estimation for weighted local heat equation}, Filomat, \textbf{38 (17)} (2024), 6125–6134

\bibitem{SB-2}
S. Bhattacharyya, S. Azami, and S. K. Hui, \emph{Hamilton and Souplet–Zhang type estimations on semilinear parabolic system along geometric flow}, Indian J Pure Appl Math (2024). https://doi.org/10.1007/s13226-024-00586-4
\bibitem{Grimmett}
Grimmett and Stirzaker, \emph{Probability and Random Processes}, Oxford University Press, 3rd ed., 2001.


\bibitem{harnack}
A. Harnack, \emph{Die Grundlagen der Theorie des logarithmischen Potentiales und der eindeutigen Potentialfunktion in der Ebene}, Leipzig: B.G. Teubner, 1887.

\bibitem{Hamilton-1}
R. S. Hamilton, \emph{The Ricci flow on surfaces}, Mathematics and General Relativity (Santa Cruz, CA, 1986), Contemporary Mathematics, \textbf{71} (1988), 237–262.

\bibitem{Hatano}
Hatano and Nelson, \emph{Localization transitions in non-Hermitian quantum mechanics}, Phys. Rev. Lett. \textbf{77} (1996)

\bibitem{Hui-1}
S. K. Hui, A. Abolarinwa and S. Bhattacharyya, \emph{Gradient estimations for nonlinear elliptic equations on weighted Riemannian manifolds}, Lobachevskii Journal of Mathematics, \textbf{44 (4)} (2023), 1332–1340

\bibitem{Hui-2}
S. K. Hui, A. Abolarinwa, S. Bhattacharyya, \emph{Liouville type theorem for weighted p-Laplacian and elliptic gradient estimate}, Sao Paulo Journal of Mathematical Sciences, \textbf{19 (1)} (2025), 1-14.
\bibitem{Hull}
J. C. Hull, \emph{Options, Futures, and Other Derivatives}, 10th ed., Pearson, 2017.
\bibitem{Li-Yau}
P. Li and S.-T. Yau, \textit{On the parabolic kernel of the Schr\"odinger operator}, Acta Mathematica, \textbf{156 } (1986), 153–201.

\bibitem{Perelman-1}
G. Perelman, \emph{The entropy formula for the Ricci flow and its geometric applications}, arXiv:math/0211159 [math.DG] (2002).

\bibitem{Perelman-2}
G. Perelman, \emph{Ricci flow with surgery on three-manifolds}, arXiv:math/0303109 [math.DG] (2003).

\bibitem{Perelman-3}
G. Perelman, \emph{Finite extinction time for the solutions to the Ricci flow on certain three-manifolds}, arXiv:math/0307245 [math.DG] (2003).

\bibitem{Perona}
Perona and Malik, \emph{Scale-space and edge detection using anisotropic diffusion}, IEEE PAMI (1990).

\bibitem{Roussel}
M.R. Roussel, \emph{Reaction--Diffusion Equations in Chemical Kinetics}, Univ. of Lethbridge (2005).

\bibitem{Winkler-1}
M. Winkler, Finite-time blow-up in the higher-dimensional parabolic-parabolic Keller-Segel system, J. Math. Pure. Appl., 100
(2013), 748-767.

\bibitem{Wu-Yang-1}
H. Wu and X. Yang, Global existence and finite time blow-up for a parabolic system on hyperbolic space, Journal of Mathematical
Physics, 59 (2018), 1-11.

\end{thebibliography}
\end{document}